\date{July 21, 2014}
\definecolor{Myblue}{rgb}{0,0,0.6}
\newcommand\bC{\mathbb{C}}
\newcommand\bR{\mathbb{R}}
\newcommand\bZ{\mathbb{Z}}
\newcommand\fg{\mathfrak{g}}
\newcommand\fh{\mathfrak{h}}
\newcommand\fX{\mathfrak{X}}
\newtheorem{theorem}{Theorem}
\newtheorem{proposition}[theorem]{Proposition}
\newtheorem{corollary}[theorem]{Corollary}
\newtheorem{remark}[theorem]{Remark}
\theoremstyle{definition}\newtheorem{ex}[theorem]{Example}
\author[G. Bazzoni]{Giovanni Bazzoni}
\address{Fakult\"{a}t f\"{ur} Mathematik, Universit\"{a}t Bielefeld, Postfach 100301, D-33501 Bielefeld}
\email{gbazzoni@math.uni-bielefeld.de}
\author[J. C. Marrero]{Juan Carlos Marrero}
\address{Unidad Asociada ULL-CSIC, Geometr{\'\i}a Diferencial y Mec\'anica Geom\'etrica, Universidad de La Laguna, Facultad de Matem\'aticas, Avda. Astrof\'isico Francisco S\'anchez s/n. 38071, La Laguna}
\email{jcmarrer@ull.edu.es}
\subjclass[2010]{Primary: 53D05. Secondary: 53C55} 
\keywords{Locally conformal symplectic manifolds; locally conformal K\"ahler manifolds; compact nilmanifolds}
\title[Compact locally conformal symplectic nilmanifolds]{Locally conformal symplectic nilmanifolds with no locally conformal K\"ahler metrics}
\begin{document}

\begin{abstract}
We obtain an example of a compact locally conformal symplectic nilmanifold which admits no locally conformal K\"ahler metrics. This gives a new positive answer to a question raised by L. Ornea and M.
Verbitsky.
\end{abstract}

\maketitle

\section{Introduction}\label{sec:Introduction}

In the seminal paper \cite{GH}, Gray and Hervella studied almost Hermitian structures on even dimensional manifolds. Recall that an almost Hermitian structure on a manifold
$M$ of dimension $2n$ consists of a Riemannian metric $g$ and a compatible almost complex structure $J$, that is, an endomorphism $J\colon TM\to TM$ with $J^2=-\mathrm{Id}$, such
that $g(JX,JY)=g(X,Y)$ for every $X,Y\in\fX(M)$. Let $(g,J)$ be an almost Hermitian structure on $M$; then one can consider the following tensors on $M$:
\begin{itemize}
 \item a 2-form $\omega\in\Omega^2(M)$, the K\"ahler form, defined by $\omega(X,Y)=g(JX,Y)$, $X,Y\in\fX(M)$;
 \item a 1-form $\theta\in\Omega^1(M)$, the Lee form, defined by $\theta(X)=\frac{-1}{n-1}\delta\omega(JX)$, where $\delta$ is the codifferential and $X\in\fX(M)$.
\end{itemize}
Gray and Hervella classified almost Hermitian structures $(g,J)$ by studying the covariant derivative of the K\"ahler form $\omega$ with respect to the Levi-Civita connection $\nabla$ of $g$. Of
particular interest for us are the following classes:
\begin{itemize}
 \item the class of K\"ahler structures, for which $\nabla\omega=0$. In this case, $g$ is said to be a K\"ahler metric.  Compact examples of manifolds endowed with K\"ahler structures are Riemann
surfaces, complex tori, complex projective spaces and projective varieties; notice that the Lee form is zero on a K\"ahler manifold;
 \item the class of locally conformal K\"ahler (lcK from now on) structures, for which $d\omega=\omega\wedge\theta$; in this case we call $g$ a lcK metric. A remarkable example of manifold endowed
with a lcK structure is the Hopf surface. The study of lcK metrics on compact complex surfaces was undertaken in \cite{Be}.
\end{itemize}

In this note we will use indifferently the terminology (locally conformal) K\"ahler \emph{manifold} and (locally conformal) K\"ahler \emph{structure}. For a comprehensive introduction to K\"ahler
geometry we refer to \cite{Hu}; for lcK geometry, the standard reference is \cite{DO}; see also the recent survey \cite{OV}.

K\"ahler and locally conformal K\"ahler manifolds are examples of complex manifolds; this means that the almost complex structure $J$ is integrable, hence our manifold $M^{2n}$ is locally modelled on
$\bC^n$. Furthermore, on K\"ahler and locally conformal K\"ahler manifolds, any two of the tensors $(g,J,\omega)$ determine the third.
One can interpret K\"ahler manifolds as a ``degenerate'' case of locally conformal K\"ahler manifolds. Indeed, it turns out that the Lee form of a lcK structure is closed; suppose that it is exact; one can then
easily show that there is a K\"ahler metric in the conformal class of $g$. In such case, one says that the structure is globally conformal K\"ahler (gcK). In general, genuine lcK manifolds (those for
which the Lee form is not exact) admit an open cover $\{U_\alpha\}$ such that the Lee form is exact on each $U_\alpha$, hence the lcK metric is locally conformal to a K\"ahler metric.

The non-metric version of K\"ahler manifolds are symplectic manifolds (see \cite{McDS}). A manifold $M^{2n}$ is symplectic if there exists a 2-form $\omega\in\Omega^2(M)$ such that
$d\omega=0$
and $\omega^n$ is a volume form. Clearly, the K\"ahler form of a K\"ahler structure is symplectic. It is well known that the existence of a K\"ahler metric on a compact manifold deeply influences its
topology. In fact, suppose $M$ is a compact K\"ahler manifold of dimension $2n$ and let $\omega$ be the K\"ahler form; then:
\begin{itemize}
 \item the odd Betti numbers of $M$ are even;
 \item the Lefschetz map $H^{p}(M)\to H^{2n-p}(M)$, $[\alpha]\mapsto [\alpha\wedge\omega^{n-p}]$ is an isomorphism for $0\leq p\leq n$;
 \item $M$ is formal.
\end{itemize}
For a long time, the only known examples of symplectic manifolds came from K\"ahler (or even projective) geometry. In \cite{Th}, Thurston constructed the first example of a compact, symplectic
4-manifold which violates each of the three conditions given above. Since then, the problem of constructing compact symplectic manifolds without
K\"ahler structures has inspired beautiful Mathematics (see, for instance, \cite{FM,Go,McD,OT}).

The non-metric version of locally conformal K\"ahler manifolds are locally conformal symplectic (lcs) manifolds (see for instance \cite{Ba,Va1}). A manifold $M^{2n}$, endowed with
an almost symplectic structure $\omega$ (that is, $\omega^n$ is a volume form on $M$), is said to be lcs if there exists a closed 1-form $\theta$ such that $d\omega = \omega \wedge \theta$. When $n
\geq 2$ the $1$-form $\theta$ is uniquely determined and is called the Lee form. The defining condition of a lcs structure is equivalent  to the existence of an
open cover $\{U_\alpha\}$ of $M$ such that the Lee form is exact on each open set $U_\alpha$; on such $U_\alpha$, a suitable multiple $\omega'$ of the form $\omega$ is closed, hence $\omega'$
is a symplectic structure on $U_\alpha$. Analogous to the K\"ahler case, the K\"ahler form and the Lee form of a locally conformal K\"ahler structure define a locally conformal symplectic structure. A
lcs manifold is globally conformal symplectic (gcs) if the Lee form is exact; in this case, a multiple of $\omega$ is closed. A lcs manifold with vanishing first Betti number is automatically gcs.

The three properties of compact K\"ahler manifolds that we listed above constrain the topology of compact K\"ahler manifolds; on the other hand, the topology of symplectic manifolds is not
constrained, see for instance \cite{Ca}. In contrast, not much is known on the topology of compact lcK manifolds. In \cite[Conjecture 2.1]{DO} it was conjectured that a compact lcK manifold which
satisfies the topological restrictions of a K\"ahler manifold admits a (global) K\"ahler metric. A stronger conjecture (see \cite[Conjecture 2.2]{DO}) is that a compact lcK manifold
which is not gcK must have at least one Betti number which is odd. The latter conjecture was proven in the context of Vaisman structures, namely it was shown in \cite{KS,Va} that a compact Vaisman
manifold has odd $b_1$. Recall that Vaisman structures are a special class of lcK structures: a lcK metric $g$ is of Vaisman type if, in the conformal class of $g$, there is a metric $g'$ such that
the Lee form is parallel with respect to the Levi-Civita connection of $g'$. However, the statement was disproven for general lcK metrics; in \cite{OeTo}, Oeljeklaus and Toma constructed a lcK
manifold with $b_1=2$.

As a consequence of the lack of topological obstructions, it is not easy to exclude that a manifold carries a locally conformal K\"ahler
metric. This motivates the following problem, proposed by Ornea and Verbitsky in \cite[Open Problem 1]{OV}:
\begin{quote}
 \emph{Construct a compact lcs manifold which admits no lcK metrics.}
\end{quote}

In \cite{BK}, Bande and Kotschick gave a method to construct examples of compact lcs manifolds which carry no lcK metrics; we describe it briefly. Suppose that $M$ is 
an oriented compact manifold of dimension $3$. Using some results on contact structures in dimension 3 \cite{Mar}, we have that $M$ admits a contact structure and, then, the product manifold $M
\times S^1$ is lcs (see \cite{BK,Va1}). On the other hand, using some results on compact complex surfaces, one deduces that $M$ may be chosen in such a way that the product manifold $M \times S^1$
admits no complex structures (for instance, if $M$ is hyperbolic) and, in particular, no lcK structures.


The main result of this note is to present another kind of examples of compact lcs manifolds which admit no lcK metrics. More precisely, we prove:

\begin{theorem}\label{main}
 There exists an example of a 4-dimensional compact nilmanifold which carries a locally conformal symplectic structure but no locally conformal K\"ahler metrics. This manifold is not a product of a
3-dimensional compact manifold and a circle.
\end{theorem}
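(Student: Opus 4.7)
The manifold $N$ I would construct is the compact quotient $\Gamma\backslash G$, where $G$ is the simply-connected nilpotent Lie group whose Lie algebra $\fg$ has basis $\{X_1,X_2,X_3,X_4\}$ with nonzero brackets $[X_1,X_2]=X_3$ and $[X_1,X_3]=X_4$ (the $4$-dimensional filiform algebra), and $\Gamma$ is any lattice, which exists by Malcev's theorem since the structure constants are rational. Denoting the dual basis of left-invariant $1$-forms by $\alpha^1,\dots,\alpha^4$, the Maurer--Cartan equations read $d\alpha^1=d\alpha^2=0$, $d\alpha^3=-\alpha^1\wedge\alpha^2$, $d\alpha^4=-\alpha^1\wedge\alpha^3$. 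A direct check then shows that the invariant $2$-form $\omega:=\alpha^1\wedge\alpha^3+\alpha^2\wedge\alpha^4$ is non-degenerate and satisfies $d\omega=-\alpha^1\wedge\alpha^2\wedge\alpha^3=\alpha^2\wedge\omega$, so together with $\theta:=\alpha^2$ it defines a left-invariant, and therefore well-defined, lcs structure on $N$.

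The non-existence of an lcK metric on $N$ I would prove by a symmetrisation-and-case-analysis strategy in the spirit of Hasegawa's theorem for K\"ahler nilmanifolds. Given any lcK structure with Lee form $\theta$, Nomizu's theorem produces a left-invariant closed $1$-form cohomologous to $\theta$; performing a conformal change by an exact factor, one may assume the Lee form itself is left-invariant. A Belgun-type averaging of the Hermitian pair $(J,g)$ over the fibres of the projection $N\to T^{b_1(N)}$ then yields a left-invariant lcK structure on $\fg$. At this point the problem becomes purely algebraic: writing an arbitrary invariant almost complex structure $J$ in the basis $\{X_1,\dots,X_4\}$, the integrability condition $N_J=0$ and the lcK equation $d\omega_J=\theta\wedge\omega_J$ combine into an overdetermined polynomial system in the entries of $J$ and the coefficients of $\theta$; a finite case analysis should show that this system is inconsistent.

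The last assertion, that $N$ is not diffeomorphic to any product $M^3\times S^1$, follows from the indecomposability of $\fg$. Suppose for contradiction that such a diffeomorphism exists. Since $N$ is aspherical, so is $M^3$, and hence $\pi_1(N)\cong\pi_1(M^3)\times\mathbb{Z}$. The left factor is torsion-free nilpotent, being a subgroup of the lattice $\Gamma$. Taking real Malcev completions (a functor that commutes with direct products) yields a Lie algebra isomorphism $\fg\cong\fh\oplus\mathbb{R}$ for some $3$-dimensional nilpotent $\fh$. Then $Z(\fg)=Z(\fh)\oplus\mathbb{R}$ has dimension at least two, since any nonzero nilpotent Lie algebra has nontrivial centre; but a direct computation gives $Z(\fg)=\langle X_4\rangle$, a contradiction.

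The most delicate step, in my view, is the reduction of an arbitrary lcK structure to a left-invariant one: this does not follow formally from Nomizu's theorem and must be carried out via a Belgun-type averaging that respects both the complex structure and the conformal class. The remaining ingredients -- the explicit invariant lcs structure and the product-decomposition argument via Malcev completions -- are essentially elementary.
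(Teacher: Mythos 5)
Your choice of nilmanifold and the explicit invariant lcs structure are correct and essentially identical to the paper's: the algebra $(0,0,12,13)$ with $\omega=\alpha^1\wedge\alpha^3+\alpha^2\wedge\alpha^4$ and Lee form $\theta=\alpha^2$, which is closed and non-exact by Nomizu's theorem. The genuine gap is in the central claim, the non-existence of lcK metrics. Your strategy --- reduce an arbitrary lcK structure to a left-invariant one by a Belgun-type averaging and then rule out invariant solutions by a polynomial case analysis --- is not actually carried out, and its first step is precisely the known hard point: there is no general symmetrisation result that converts an arbitrary lcK structure on a nilmanifold into a left-invariant one while preserving both the integrability of $J$ and the lcK equation. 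This is exactly why Sawai's theorem \cite{Sa}, cited in the paper, must \emph{assume} that the complex structure is left-invariant; without that hypothesis the statement is a conjecture. Writing that a finite case analysis ``should show'' the system is inconsistent is likewise not a proof. The paper sidesteps all of this by proving something stronger and purely topological (Proposition \ref{main:prop1}): $N$ admits \emph{no complex structure at all}. Since $b_1(N)=2$ is even, the classification of compact complex surfaces (\cite[Theorem 3.1, p.~144]{BPVdV}) would force any complex structure on $N$ to carry a K\"ahler metric, and Theorem \ref{BG-HAS} (Benson--Gordon/Hasegawa) would then force $N$ to be diffeomorphic to $T^4$, which it is not. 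Hence $N$ is not complex, and a fortiori not lcK. You should replace your second paragraph by this argument, or else supply complete proofs of both the averaging step and the algebraic classification, which would be a substantial (and currently out-of-reach) undertaking.

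Your third paragraph, on the non-product statement, is correct but takes a different route from the paper. You use functoriality of the Mal'cev completion under direct products together with the computation $Z(\fg)=\langle X_4\rangle$, which is incompatible with a splitting $\fg\cong\fh\oplus\bR$ (whose centre has dimension at least $2$). The paper instead argues via Betti numbers: $\pi_1(M)$ is finitely generated, torsion-free and nilpotent, so by Mal'cev it is the fundamental group of a compact nilmanifold $P$ with $b_1(P)\geq 2$; asphericity transfers this bound to $M$, while the K\"unneth formula applied to $N=M\times S^1$ forces $b_1(M)=1$. Both arguments are valid; yours is more self-contained at the Lie-algebra level, while the paper's uses only first Betti numbers and avoids discussing centres.
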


The example of Thurston that we mentioned above (which admits a Vaisman structure) is also a nilmanifold. This makes the parallel between the symplectic and the locally conformal symplectic case particularly transparent.

This note is organized as follows. In Section \ref{sec:Nilmanifolds} we review some basics about nilmanifolds and describe briefly Thurston's example. In Section \ref{sec:Main} we prove the main
result.

{\bf Acknowledgements} J. C. Marrero acknowledges support from MEC (Spain) Grants
 MTM2011-15725-E, MTM2012-34478 and the project of the Canary Government ProdID20100210. G. Bazzoni is partially supported by MEC (Spain) Grant
 MTM2010-17389. The authors would like to thank Prof. M. Fern\'andez, L. Ornea and M. Verbitsky for useful comments and remarks.

\section{Basics on nilmanifolds}\label{sec:Nilmanifolds}

Let $G$ be a simply connected $n$-dimensional nilpotent Lie group. A \emph{nilmanifold} is a homogeneous space $N=\Gamma\backslash G$ where $\Gamma\subset G$ is a
lattice. Let $\fg$ be the Lie algebra of $G$; then $\fg$ is a nilpotent Lie algebra and the exponential map $\exp\colon\fg\to G$ is a global diffeomorphism. Hence $G$ is diffeomorphic to $\bR^n$. This
entails that $p\colon G\to N$ is a covering map and that $N$ is the Eilenberg-MacLane space $K(\Gamma,1)$. Tori are simple examples of nilmanifolds. A slightly less trivial example is the following:
let $H$ be the Heisenberg group, defined as
\[
H=\left\{\begin{pmatrix}
1 & y & z\\
0 & 1 & x\\
0 & 0 & 1
\end{pmatrix} \ | \ x,y,z \in\bR\right\}.
\]
Then $H$ is a nilpotent Lie group. A lattice $H_\bZ$ in $H$ is provided by matrices in $H$ such that $x,y,z\in\bZ$. Hence $H_\bZ\backslash H$ is a compact nilmanifold.

There is a simple criterion for the existence of a lattice $\Gamma$ in a nilpotent Lie group $G$:

\begin{theorem}[Mal'cev, \cite{Ma}]
 Let $G$ be a simply connected nilpotent Lie group with Lie algebra $\fg$. Then $G$ admits a lattice if and only if there exists a basis of $\fg$ with respect to which the structure constants are
rational numbers.
\end{theorem}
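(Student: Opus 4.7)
The proof rests on two standard facts about the simply connected nilpotent Lie group $G$: the exponential map $\exp\colon\fg\to G$ is a global diffeomorphism, and the Baker--Campbell--Hausdorff (BCH) formula
\[ \exp(X)\exp(Y)=\exp\bigl(X+Y+\tfrac12[X,Y]+\cdots\bigr) \]
terminates as a \emph{finite} polynomial in iterated brackets, with rational universal coefficients. Together these translate freely between group-theoretic data in $G$ and bracket data in $\fg$. I would prove the two implications separately, in both cases working with a basis adapted to a rational central filtration (a so-called Mal'cev basis).

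For the ``only if'' direction, start from a lattice $\Gamma\subset G$ and consider the lower central series $G=G^1\supset G^2\supset\cdots\supset G^{k+1}=\{e\}$. By the general theory of discrete subgroups of nilpotent groups, each $\Gamma_i:=\Gamma\cap G^i$ is a lattice in $G^i$ and $\Gamma_i/\Gamma_{i+1}\cong\bZ^{d_i}$ where $d_i=\dim G^i-\dim G^{i+1}$. Choosing generators of each quotient and lifting them, one obtains elements $\gamma_1,\dots,\gamma_n\in\Gamma$ whose logarithms $X_i:=\log\gamma_i$ form a basis of $\fg$ adapted to the filtration. Since $[\gamma_i,\gamma_j]\in\Gamma$, one can express each commutator as an integer word in the $\gamma_k$; equating this, via the finite BCH expansion, with the polynomial expression for $[X_i,X_j]$ modulo lower filtration terms forces the structure constants to lie in $\bQ$ by descending induction on the filtration.

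For the converse, suppose $\fg$ has a basis $X_1,\dots,X_n$ with rational structure constants, chosen to be adapted to a flag of ideals $\fg=\fg_1\supset\cdots\supset\fg_{n+1}=0$ with $\dim\fg_i=n-i+1$. Parametrize $G$ by exponential coordinates of the second kind, $(t_1,\dots,t_n)\mapsto\exp(t_1X_1)\cdots\exp(t_nX_n)$, which is a diffeomorphism $\bR^n\xrightarrow{\sim}G$. Using BCH repeatedly and the fact that each step drops into a smaller ideal, the group multiplication in these coordinates is a polynomial map $\bR^n\times\bR^n\to\bR^n$ with rational coefficients. Pick an integer $N>0$ that clears every denominator appearing in this polynomial, rescale $X_i\mapsto NX_i$, and set
\[ \Gamma:=\{\exp(m_1X_1)\cdots\exp(m_nX_n):m_1,\dots,m_n\in\bZ\}. \]
By construction $\Gamma$ is closed under multiplication and inversion, and in the second-kind coordinates it corresponds to $\bZ^n\subset\bR^n$, hence is discrete and cocompact.

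The main obstacle is in the converse: one must verify that a \emph{single} integer rescaling of the basis makes the group law polynomial with integer coefficients in the chosen coordinates. This is the only genuinely combinatorial step, and it reduces to bounding, uniformly in the nilpotency class, the denominators produced by the BCH expansion when expressed in coordinates of the second kind. Once this is granted, discreteness, cocompactness and the forward direction are routine bookkeeping through BCH.
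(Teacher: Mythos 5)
The paper offers no proof of this statement: it is quoted as a classical theorem of Mal'cev with a citation to \cite{Ma}, so there is no internal argument to compare yours against. What you have written is the standard textbook proof (essentially Mal'cev's original argument, as presented e.g.\ in Raghunathan or Corwin--Greenleaf), and the overall architecture --- global exponential, polynomial BCH, Mal'cev basis adapted to the lower central series for the ``only if'' direction, coordinates of the second kind plus an integer rescaling for the ``if'' direction --- is the right one.

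Two caveats on completeness. First, in the ``only if'' direction you invoke as ``general theory'' the fact that $\Gamma\cap G^i$ is a lattice in each term $G^i$ of the lower central series with $\Gamma_i/\Gamma_{i+1}$ free abelian of the right rank; this is itself a substantive theorem (of the same order of difficulty as the statement being proved), so as written that direction leans on an unproved input rather than on routine bookkeeping. Second, you explicitly defer the key integrality step of the converse, but it can be closed in one line and you should close it: in exponential coordinates of the second kind the multiplication polynomial $P$ satisfies $P(s,0)=s$ and $P(0,t)=t$, so every monomial of $P(s,t)-s-t$ involves at least one $s$-variable and one $t$-variable and hence has total degree $d\geq 2$; the rescaling $X_i\mapsto NX_i$ amounts to $t_i\mapsto t_i/N$, which multiplies the coefficient of a degree-$d$ monomial by $N^{d-1}$, and since $N^{d-1}=N^{d-2}\cdot N$ with $N$ a common multiple of all denominators, every coefficient becomes an integer. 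Note also that discreteness and cocompactness of $\Gamma$ do not follow merely from its being the image of $\bZ^n$ under the (non-homomorphic) second-kind chart; one needs the integer-coefficient group law to see that left translation by $\Gamma$ carries the image of $[0,1]^n$ over all of $G$. With these points supplied, the proof is correct.
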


Suppose $N=\Gamma\backslash G$ is a compact nilmanifold and let us identify $\fg$ with left-invariant vector fields on $G$. These vector fields are, in particular, invariant under the (left) action of
$\Gamma$ on $G$. We can also identify $\fg^*$, the dual of $\fg$, with left-invariant 1-forms on $G$. In order to describe a geometric structure on a nilmanifold $N$ (given, for example, in terms of
forms and vector fields), we can specify its ``value'' at the (class of) the identity element $[e]\in N$, that is, by giving a corresponding geometric structure in $\fg$.

Let $\{X_1,\ldots,X_n\}$ be a basis of $\fg$ and let $\{x_1\ldots,x_n\}$ denote the dual basis of $\fg^*$, defined by $x_i(X_j)=\delta_{ij}$. Dualizing the bracket, we obtain a map
$d\colon\fg^*\to\wedge^2\fg^*$; for $x_i\in\fg^*$, $dx_i(X_j,X_k)=-x_i([X_j,X_k])$. $d$ is then extended to the whole exterior algebra $\wedge^*\fg^*$ by imposing
the graded Leibnitz rule. We thus obtain a differential complex $(\wedge^*\fg^*,d)$, called Chevalley-Eilenberg complex. The property $d^2=0$ is equivalent to the Jacobi identity in $\fg$. The
Chevalley-Eilenberg complex $(\wedge^*\fg^*,d)$ computes the Lie algebra cohomology of $\fg$.

\begin{ex}
 Let $N=\Gamma\backslash G$ be a nilmanifold of dimension $2n$. An invariant symplectic structure on $N$ is described by the mean of a 2-form $\omega\in\wedge^2\fg^*$ such that $d\omega=0$ and
$\omega^n\neq 0$.
\end{ex}

\begin{ex}\label{KT}
Let $G$ denote the product $H\times\bR$, where $H$ is the Heisenberg group described above. It is easy to see that the Lie algebra $\fg$ of $G$ admits a basis $\{X_1,\ldots,X_4\}$ with only non-zero
bracket $[X_1,X_2]=-[X_2,X_1]=-X_4$. Accordingly, $\fg^*$ has a basis $\{x_1,\ldots,x_4\}$ with $dx_1=dx_2=dx_3=0$ and $dx_4=x_1\wedge x_2$. Then $\omega=x_1\wedge x_4+x_2\wedge x_3\in\wedge^2\fg^*$ is
closed and satisfies $\omega^2\neq 0$. Let $\Gamma\subset G$ denote the lattice $H_\bZ\times\bZ$ and let $N=\Gamma\backslash G$ be the corresponding nilmanifold. By abuse of notation, we denote also
by $\omega$ the 2-form on $N$ obtained from $\omega\in\wedge^2\fg^*$. Then $\omega\in\Omega^2(N)$ is closed and nondegenerate, hence $(N,\omega)$ is a compact symplectic nilmanifold.
\end{ex}

One feature of nilmanifolds, which makes them particularly tractable from a cohomological point of view, is that their de Rham cohomology is computed by the Chevalley-Eilenberg complex. More
precisely,

\begin{theorem}[Nomizu, \cite{No}]\label{Nomizu}
 Let $N=\Gamma\backslash G$ be a compact nilmanifold. Then the natural inclusion $(\wedge^*\fg^*,d)\hookrightarrow (\Omega^*(N),d)$ induces an isomorphism in cohomology.
\end{theorem}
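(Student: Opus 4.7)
The plan is to induct on the dimension $n = \dim G$. For the base case $n=1$, the group $G = \bR$ is abelian, the nilmanifold $N$ is a circle $S^1$, and the Chevalley-Eilenberg complex $\wedge^*\fg^* = \wedge^*(x_1)$ with $dx_1 = 0$ manifestly computes $H^*(S^1;\bR)$.

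For the inductive step, the first task is to construct a principal $S^1$-fibration whose base is again a compact nilmanifold of dimension $n-1$. Since $\fg$ is nilpotent, its center $Z(\fg)$ is nonzero; using Mal'cev's structure theory for lattices in nilpotent groups, I would single out a one-dimensional \emph{rational} central ideal $\mathfrak{z} \subset Z(\fg)$, i.e.\ one spanned by a vector of a rational Mal'cev basis adapted to $\Gamma$. Setting $Z = \exp \mathfrak{z} \subset G$ and $G' = G/Z$, rationality of $\mathfrak{z}$ guarantees that $\Gamma \cap Z$ is a lattice in $Z \cong \bR$ (yielding a circle fiber), while $\Gamma' := \Gamma/(\Gamma\cap Z)$ descends to a lattice in the $(n-1)$-dimensional simply connected nilpotent Lie group $G'$. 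The projection $G \to G'$ then passes to the quotient and produces a principal $S^1$-bundle
\[
 S^1 \longrightarrow N \longrightarrow N' := \Gamma'\backslash G'.
\]

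With this fibration in place, I would compare two spectral sequences converging to the same graded vector space. On the smooth side, the Leray-Serre spectral sequence of the above principal bundle has
\[
E_2^{p,q}(\mathrm{LS}) = H^p(N') \otimes H^q(S^1) \Longrightarrow H^{p+q}(N),
\]
with untwisted coefficients because the structure group $S^1$ is connected. On the algebraic side, the short exact sequence $0 \to \mathfrak{z} \to \fg \to \fg' \to 0$ of Lie algebras gives rise to the Hochschild-Serre spectral sequence
\[
E_2^{p,q}(\mathrm{HS}) = H^p(\fg') \otimes H^q(\mathfrak{z}) \Longrightarrow H^{p+q}(\fg),
\]
and by naturality the inclusion $\wedge^*\fg^* \hookrightarrow \Omega^*(N)$ sends one filtration to the other and hence induces a morphism between these spectral sequences. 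By the inductive hypothesis applied to $N'$ the comparison map is an isomorphism on the base tensor factor at $E_2$, and for the circle fiber it is an isomorphism by the base case; the two $E_2$ pages therefore agree, so the map is also an isomorphism on $E_\infty$, and the theorem follows.

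The main technical hurdle is producing the rational one-dimensional central ideal: one has to upgrade the abstract fact $Z(\fg) \neq 0$ into a central direction compatible with $\Gamma$, so that the quotient $N'$ is again a compact nilmanifold rather than a more general homogeneous space; this is precisely where Mal'cev's theorem enters in a nontrivial way. A secondary but real subtlety is checking that the inclusion of invariant forms into all smooth forms intertwines the two filtrations, so that the two transgressions (the cohomology class of the central extension in $H^2(\fg')$ on one side and the Euler class of the circle bundle on the other) match under the inductive isomorphism on $E_2$. Once these compatibilities are established, the spectral sequence comparison closes the induction.
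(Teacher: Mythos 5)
The paper does not prove this theorem: it is quoted from Nomizu's paper \cite{No} and used as a black box, so there is no internal proof to compare against. Your argument is the classical proof of the result (essentially Nomizu's own, and the one found in standard references such as Raghunathan or Oprea--Tralle): realize the nilmanifold as a principal $S^1$-bundle over a lower-dimensional compact nilmanifold via a rational line in the center, and compare the Hochschild--Serre spectral sequence of $0\to\mathfrak{z}\to\fg\to\fg'\to 0$ with the Serre spectral sequence of the fibration through the filtration-preserving inclusion of invariant forms. The outline is correct, and you correctly identify the two genuine care points: (i) the central circle must be chosen \emph{rationally} with respect to the Mal'cev rational structure determined by $\Gamma$ (equivalently, generated by a primitive element of $\Gamma\cap Z(G)$, which is a lattice in $Z(G)$ by Mal'cev theory), so that both the fiber and the base are again compact nilmanifolds; and (ii) the inclusion intertwines the two filtrations. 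One small simplification: once you know the comparison is a morphism of spectral sequences that is an isomorphism on $E_2$, the standard mapping lemma for first-quadrant spectral sequences with finite filtrations gives the isomorphism on the abutment directly; you do not need to check separately that the transgressions (Euler class versus the extension class in $H^2(\fg')$) match, since that is forced by naturality.
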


\begin{corollary}\label{odd_Betti}
 Let $N=\Gamma\backslash G$ be a compact nilmanifold of dimension $n$. Then, for every $0\leq i\leq n$, $b_i(N)=b_i(\fg)$.
\end{corollary}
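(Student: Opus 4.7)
The plan is essentially to read off the dimensions from the cohomology isomorphism already established in \thmref{Nomizu}, since a corollary at this point in the paper should be nothing more than an unpacking of definitions.

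First I would recall that, by definition, the $i$-th Betti number of the compact manifold $N$ is $b_i(N)=\dim_{\bR} H^i(\Omega^*(N),d)$, while the $i$-th Betti number of the Lie algebra $\fg$ is $b_i(\fg)=\dim_{\bR} H^i(\wedge^*\fg^*,d)$, where the latter is the cohomology of the Chevalley--Eilenberg complex introduced just above the corollary. Next I would invoke \thmref{Nomizu}: the inclusion $(\wedge^*\fg^*,d)\hookrightarrow(\Omega^*(N),d)$ induces an isomorphism $H^i(\wedge^*\fg^*,d)\cong H^i(\Omega^*(N),d)$ for every $i$. Taking dimensions on both sides yields $b_i(\fg)=b_i(N)$.

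Finally I would observe that for $i$ outside the range $0\leq i\leq n$ both sides vanish trivially, since $\wedge^i\fg^*=0$ and $N$ has dimension $n$, so the statement holds for all relevant $i$.

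There is essentially no obstacle here: the only substantive content has been packaged into Nomizu's theorem, and the corollary is a formal consequence. The only thing to watch is the bookkeeping convention that the Betti number of a Lie algebra means the dimension of its Chevalley--Eilenberg cohomology, so that the equality is literally an equality of dimensions rather than just an isomorphism of vector spaces.
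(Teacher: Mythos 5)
Your proof is correct and matches the paper's (implicit) argument exactly: the corollary is stated as an immediate consequence of Nomizu's theorem, obtained by taking dimensions of the isomorphic cohomology groups. Nothing further is needed.
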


If we apply Corollary \ref{odd_Betti} to the nilmanifold $N$ of Example \ref{KT}, we see that $b_1(N)=3$, since 3 of the 4 generators of $\fg^*$ are closed. Comparing this
with the topological restrictions imposed by the existence of a K\"ahler metric on a compact manifold, we see immediately that $N$ can not carry any K\"ahler metric. 
One can also easily show that $N$ is neither Hard Lefschetz nor formal. The nilmanifold $N$ of Example \ref{KT} is known as the Kodaira-Thurston manifold. On the one hand, it appeared in the
classification of compact complex surfaces carried out by Kodaira (see \cite{Ko}); on the other hand, it was the first example of a compact
symplectic manifold without any K\"ahler structure in the celebrated paper \cite{Th} by Thurston.

As a matter of fact, compact K\"ahler nilmanifolds are very rare. More precisely, we have the following theorem:

\begin{theorem}[Benson-Gordon, \cite{BG} and Hasegawa, \cite{Ha}]\label{BG-HAS}
 Let $N$ be a $2n$-dimensional compact nilmanifold endowed with a K\"ahler structure. Then $N$ is diffeomorphic to the torus $T^{2n}$.
\end{theorem}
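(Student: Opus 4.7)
The plan is to translate the K\"ahler hypothesis into a purely algebraic condition on $\fg$ and then show that this condition is incompatible with non-trivial brackets; since a simply connected nilpotent Lie group with abelian Lie algebra is $\bR^{2n}$, forcing $\fg$ to be abelian automatically makes $N=\Gamma\backslash G$ diffeomorphic to $T^{2n}$.

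The first step is Nomizu's theorem (Theorem~\ref{Nomizu}), which gives $H^*(N;\bR)\cong H^*(\wedge^*\fg^*,d)$. Under this isomorphism the K\"ahler class $[\Omega]\in H^2(N)$ corresponds to a class $[\omega]\in H^2(\fg)$ with $[\omega]^n\neq 0$ in $H^{2n}(\fg)\cong\bR$, so the Chevalley-Eilenberg complex carries a symplectic cohomology class. Choosing a basis $\{x_1,\ldots,x_{2n}\}$ of $\fg^*$ adapted to the descending central series of $\fg$ yields $dx_k\in\wedge^2\langle x_1,\ldots,x_{k-1}\rangle$ for each $k$, which in addition exhibits $(\wedge^*\fg^*,d)$ as a minimal Sullivan model of $N$. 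Two well-known cohomological restrictions of compact K\"ahler manifolds are then transported to this algebraic model: (i) the Hard Lefschetz property, i.e.\ $L^{n-p}_{[\omega]}\colon H^p(\fg)\to H^{2n-p}(\fg)$ is an isomorphism for $0\leq p\leq n$; (ii) by the Deligne-Griffiths-Morgan-Sullivan theorem, formality, i.e.\ $(\wedge^*\fg^*,d)$ is quasi-isomorphic to $(H^*(\fg),0)$.

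The crux is then a purely Lie-algebraic statement: a nilpotent Lie algebra whose Chevalley-Eilenberg cohomology admits a Hard Lefschetz symplectic class (Benson-Gordon), or whose CE algebra is formal (Hasegawa), must be abelian. I would proceed by contradiction using the adapted basis. Assume $\fg$ is non-abelian and pick the smallest index $k$ with $dx_k\neq 0$; then $x_1,\ldots,x_{k-1}$ are closed and $dx_k=\sum_{i,j<k}a_{ij}\,x_i\wedge x_j$ with some $a_{ij}\neq 0$. In the Hard Lefschetz route, one tracks how $L^{n-1}$ acts on the subspace of $H^1(\fg)$ spanned by the closed generators and compares with $H^{2n-1}(\fg)$ via Poincar\'e duality (which holds since nilpotent Lie algebras are unimodular), extracting a dimensional obstruction that forces $dx_k=0$. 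In the formality route, the relation above yields a candidate Massey triple product of classes $[x_i],[x_j],[x_\ell]$ whose vanishing is required by formality but whose non-triviality is forced by the triangular form of $d$. Either way, $\fg$ must be abelian, whence $G\cong\bR^{2n}$ and $N\cong T^{2n}$. The main obstacle is clearly this last algebraic step---the genuine content of Benson-Gordon and Hasegawa's theorems---while everything preceding it is a routine application of Nomizu, Hard Lefschetz, and formality of compact K\"ahler manifolds.
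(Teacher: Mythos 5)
The paper does not actually prove this statement: it is quoted as a known result, with the proofs deferred to Benson--Gordon \cite{BG} (via the Hard Lefschetz property) and Hasegawa \cite{Ha} (via formality). Your reduction matches exactly the two routes the paper attributes to those references, and the preparatory steps are correct: Nomizu's theorem transports the Kähler class and the Hard Lefschetz/formality obstructions to the Chevalley--Eilenberg complex, the adapted (Mal'cev) basis exhibits $(\wedge^*\fg^*,d)$ as a minimal model, and Poincaré duality for $H^*(\fg)$ holds by unimodularity. So as a \emph{strategy} your proposal is the standard one and is consistent with the paper.

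The genuine gap is the step you yourself flag as the crux, and your two sketches of it would not go through as written. On the formality route: from $dx_k=\sum_{i,j<k}a_{ij}\,x_i\wedge x_j$ you cannot in general form a Massey triple product $\langle [x_i],[x_j],[x_\ell]\rangle$, because that requires the individual cup products $[x_i][x_j]$ and $[x_j][x_\ell]$ to vanish in $H^2(\fg)$, which the relation $\sum a_{ij}[x_i][x_j]=0$ does not give; and even when such a product is defined, its nontriviality modulo the indeterminacy subspace is not ``forced by the triangular form of $d$'' --- non-formality of a space need not be detected by triple Massey products at all. Hasegawa's actual argument does not exhibit a non-zero Massey product; it uses the characterization of formality of a minimal model $(\wedge V,d)$ via a splitting $V=C\oplus N$ with $C=\ker d$ such that every closed element of the ideal generated by $N$ is exact, and derives a contradiction with nilpotency. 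On the Hard Lefschetz route, ``tracking $L^{n-1}$ on the subspace of $H^1(\fg)$ spanned by the closed generators'' is vacuous ($H^1(\fg)$ consists precisely of the classes of closed generators); Benson and Gordon's proof requires a genuine computation identifying $H^{2n-1}(\fg)$ in terms of the center and the derived algebra $[\fg,\fg]$ and showing that $L^{n-1}\colon H^1\to H^{2n-1}$ fails to be injective when $[\fg,\fg]\neq 0$. Since this algebraic step is the entire content of the theorem, the proposal as it stands is an accurate roadmap but not a proof; for the purposes of this paper the statement is, in any case, used as a citation rather than proved.
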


In particular, Benson and Gordon proved that a compact symplectic nilmanifold which satisfies the Hard Lefeschetz property must be diffeomorphic to a torus. Hasegawa showed that the same conclusion holds for
a formal compact nilmanifold.

As a consequence, compact symplectic non-toral nilmanifolds are never K\"ahler. The problem of determining which even-dimensional nilmanifolds carry a symplectic structure is still open (see for instance
\cite{Gu}), but is completely solved in dimension 2, 4 and 6 (see \cite{BaMu}). Notice that, by Nomizu Theorem, a compact nilmanifold has a symplectic structure if and only if it has a
left-invariant one.

In what follows, we shall adopt the standard notation for nilpotent Lie algebras/nilmanifolds. It is best explained by the mean of an example: $(0,0,0,0,12,34)$
denotes a 6-dimensional nilmanifold $N=\Gamma\backslash G$ such that $\fg$ has a basis $\{X_1,\ldots,X_6\}$ with dual basis $\{x_1,\ldots,x_6\}$ such that $dx_1=\ldots=dx_4=0$, $dx_5=x_1\wedge x_2$
and $dx_6=x_3\wedge x_4$, where $d$ is the Chevalley-Eilenberg differential. Different choices of lattices in $G$ give rise to coverings between the corresponding nilmanifolds. 

We have the following result
\begin{proposition}[see for instance \cite{BM,OT}]\label{4-dim-nilm}
 Let $N=\Gamma\backslash G$ be a compact 4-dimensional nilmanifold. Then $\fg^*$ is one of the following:
 \begin{enumerate}
  \item[1.] $(0,0,0,0)$ and $N$ is diffeomorphic to a torus $T^4$
  \item[2.] $(0,0,0,12)$ and $N$ is diffeomorphic to (a finite quotient of) the Kodaira-Thurston manifold
  \item[3.] $(0,0,12,13)$ and $N$ is diffeomorphic to (a finite quotient of) an iterated circle bundle over $T^2$.
 \end{enumerate}
Correspondingly, in each cases $N$ carries a (left-invariant) symplectic structure, which we describe in terms of a basis $\{x_1,\ldots,x_4\}$ of $\fg^*$;
 \begin{enumerate}
  \item[1.] $\omega=x_1\wedge x_2+x_3\wedge x_4$
  \item[2.] $\omega=x_1\wedge x_4+x_2\wedge x_3$
  \item[3.] $\omega=x_1\wedge x_4+x_2\wedge x_3$.
 \end{enumerate}
\end{proposition}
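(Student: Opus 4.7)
The plan is to split the proposition into two tasks: (a) classify the 4-dimensional nilpotent Lie algebras that can arise as $\fg$, and (b) for each one exhibit the announced left-invariant symplectic form and check the two defining properties ($d\omega=0$ and $\omega^2\neq 0$). The diffeomorphism-type statements then follow because a simply connected nilpotent Lie group is diffeomorphic to $\bR^n$ via the exponential map, so the diffeomorphism class of $\Gamma\backslash G$ is determined (up to the choice of lattice) by the isomorphism class of $\fg$.

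For part (a), I would argue using the lower central series $\fg\supset [\fg,\fg]\supset [\fg,[\fg,\fg]]\supset\ldots$ and the nilpotency class $c$. In dimension $4$ the possibilities are very restricted. If $c=1$ then $\fg$ is abelian, giving case 1. If $c=2$ then $[\fg,\fg]$ lies in the center and, because $\dim\fg=4$, an elementary count (using that $\fg/Z(\fg)$ cannot be $1$-dimensional for a non-abelian algebra, and cannot produce more than a $1$-dimensional commutator in this dimension) forces $\dim[\fg,\fg]=1$; picking a generator $X_4$ of $[\fg,\fg]$ and completing to a basis yields the structure $(0,0,0,12)$, i.e.\ $\fh_3\oplus\bR$, giving case 2. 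If $c=3$ then the only option left is the filiform algebra with $[X_1,X_2]=X_3$, $[X_1,X_3]=X_4$, which dually reads $(0,0,12,13)$, giving case 3. The hardest part here is making sure no intermediate case is missed; I would check this by intersecting the constraints $\dim[\fg,\fg]\le\dim\fg-2$ (since $\fg$ is nilpotent) and $[\fg,\fg]\subset Z(\fg)$ when $c=2$, and in all cases verifying that the resulting structure constants satisfy Jacobi.

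For part (b), the verifications are short direct computations in the Chevalley--Eilenberg complex. In case 1 every $dx_i=0$, so $\omega=x_1\wedge x_2+x_3\wedge x_4$ is closed, and $\omega^2=2\,x_1\wedge x_2\wedge x_3\wedge x_4\neq0$. In case 2 the only non-trivial differential is $dx_4=x_1\wedge x_2$; for $\omega=x_1\wedge x_4+x_2\wedge x_3$ one gets $d\omega=-x_1\wedge(x_1\wedge x_2)=0$ and $\omega^2=-2\,x_1\wedge x_2\wedge x_3\wedge x_4\neq0$. In case 3 one has $dx_3=x_1\wedge x_2$ and $dx_4=x_1\wedge x_3$; the same $\omega$ yields $d\omega=-x_1\wedge(x_1\wedge x_3)-x_2\wedge(x_1\wedge x_2)=0$ and $\omega^2\neq0$ by inspection.

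Finally, existence of the compact quotients is guaranteed by Mal'cev's theorem stated earlier: each of the three Lie algebras has structure constants that are integers in the chosen basis, hence in particular rational, so the associated simply connected nilpotent Lie group $G$ admits a lattice $\Gamma$. The identification of $N=\Gamma\backslash G$ with a torus (case 1), with a finite quotient of the Kodaira--Thurston manifold (case 2, already discussed in Example~\ref{KT}), or with a finite quotient of an iterated $S^1$-bundle over $T^2$ (case 3, by iterating the central extension $0\to\bR\to\fg\to\fg/Z\to0$ and applying Mal'cev on each step) then follows from standard structure theory, completing the proof.
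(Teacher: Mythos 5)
Your proposal is correct, and it actually supplies more than the paper does: the paper gives no proof of this proposition at all, simply referring the reader to \cite{BM,OT}. Your argument is the standard one carried out in those references --- classify the three isomorphism classes of $4$-dimensional nilpotent Lie algebras via the nilpotency class and the lower central series, verify $d\omega=0$ and $\omega^2\neq 0$ in the Chevalley--Eilenberg complex, and invoke Mal'cev for the existence of lattices --- and all the computations you display check out (the signs in $\omega^2$ are irrelevant for nondegeneracy). Two places deserve a little more care if this were written out in full. First, in the class-$3$ case the assertion that the filiform algebra $(0,0,12,13)$ is ``the only option left'' needs the short normalization argument: since $\dim(\fg/[\fg,\fg])\geq 2$ one gets $\dim\fg^2=2$ and $\dim\fg^3=1$, then one chooses $X_4$ spanning $\fg^3$ (which is central), arranges $[X_1,X_2]=X_3$ modulo $\fg^3$, and uses a linear change of $X_1,X_2$ to kill $[X_2,X_3]$; Jacobi then forces exactly the stated brackets. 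Second, the phrase ``the diffeomorphism class of $\Gamma\backslash G$ is determined (up to the choice of lattice) by the isomorphism class of $\fg$'' should be read carefully: non-isomorphic lattices in the same $G$ can give non-diffeomorphic nilmanifolds, which is precisely why the proposition only claims diffeomorphism to \emph{a finite quotient of} the model in cases 2 and 3; your parenthetical acknowledges this, so the statement as claimed is covered.
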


\section{Proof of the main theorem}\label{sec:Main}

We prove next two propositions, of which Theorem \ref{main} is a corollary.

\begin{proposition}\label{main:prop1}
 Let $N=\Gamma\backslash G$ be the nilmanifold $(0,0,12,13)$. Then $N$ is symplectic and locally conformal symplectic but does not admit any complex structure.
\end{proposition}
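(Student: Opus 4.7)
The plan is to verify the three assertions separately, with the nonexistence of complex structures being the deepest.

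For the symplectic statement, Proposition~\ref{4-dim-nilm} already exhibits the invariant form $\omega = x_1\wedge x_4 + x_2\wedge x_3$ as a candidate. Substituting $dx_3 = x_1\wedge x_2$ and $dx_4 = x_1\wedge x_3$ makes every term in $d\omega$ contain a repeated factor, so $d\omega = 0$, and a direct expansion yields $\omega^2 = 2\,x_1\wedge x_2\wedge x_3\wedge x_4 \neq 0$.

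For the lcs part the goal is an lcs structure whose Lee form is not exact, so that it genuinely goes beyond the symplectic (gcs) case. By Nomizu's theorem (Theorem~\ref{Nomizu}), $H^1(N;\bR)$ is spanned by the classes of $x_1$ and $x_2$, so I would take the ansatz $\theta = a x_1 + b x_2$ and look for an invariant $\omega\in\wedge^2\fg^*$ with $\omega^2 \neq 0$ and $d\omega = \omega\wedge\theta$. A short linear-algebra search produces, for instance, $\theta = x_2$ and $\omega = x_1\wedge x_3 - x_2\wedge x_4$: both $d\omega$ and $\omega\wedge\theta$ equal $-x_1\wedge x_2\wedge x_3$, and $\omega^2 = 2\,x_1\wedge x_2\wedge x_3\wedge x_4$ is a volume form. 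Since $[x_2]\neq 0$ in $H^1(N;\bR)$, the Lee form is not exact, so this is a genuine lcs structure.

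For the absence of complex structures, Nomizu's theorem gives $b_1(N) = 2$, since only $x_1$ and $x_2$ are closed among the generators of $\fg^*$. Suppose for a contradiction that $N$ carried a complex structure $J$; then $(N,J)$ would be a compact complex surface with even first Betti number. By the classical theorem that a compact complex surface admits a K\"ahler metric if and only if $b_1$ is even (the nontrivial direction being due to Siu, Buchdahl, and Lamari), $(N,J)$ would then be K\"ahler. Theorem~\ref{BG-HAS} would force $N$ to be diffeomorphic to the torus $T^4$, which contradicts $b_1(N) = 2 \neq 4$.

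The main obstacle is the last step: the proof is brief on paper but rests essentially on the deep classification of compact complex surfaces and the K\"ahler-versus-even-$b_1$ equivalence. The lcs construction, by contrast, reduces to elementary linear algebra once the ansatz for $\theta$ is restricted to the closed 1-forms of $\fg^*$.
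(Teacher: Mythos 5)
Your proposal is correct and follows essentially the same route as the paper: the same invariant symplectic form, the identical lcs pair ($\omega = x_1\wedge x_3 - x_2\wedge x_4 = x_1\wedge x_3 + x_4\wedge x_2$ with $\theta = x_2$, non-exact by Nomizu), and the same final argument that a complex structure with $b_1=2$ even would force a K\"ahler metric via the classification of compact complex surfaces, contradicting Benson--Gordon--Hasegawa since $N$ is not a torus. All computations check out.
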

\begin{proof}
 A symplectic structure on $N$ has been described explicitly in Proposition \ref{4-dim-nilm}. In terms of the basis of $\fg^*$ considered above, the locally conformal symplectic structure is
described by taking the almost symplectic $2$-form to be $\omega=x_1\wedge x_3+x_4\wedge x_2$ and the Lee form to be $\theta=x_2$. Then $d\omega=\theta\wedge\omega$. Notice that $\theta$ is not exact, hence the
structure is genuinely lcs. Nomizu Theorem implies that $b_1(N)=2$. Assume that $N$ admits a complex structure. By standard results in the theory of compact complex surfaces (see for
instance \cite[Theorem 3.1, page 144]{BPVdV}), a complex surface with even first Betti number admits a K\"ahler metric. Since $N$ is clearly not diffeomorphic to $T^4$, this would give a
contradiction with Theorem \ref{BG-HAS}. Hence $N$ does not carry any complex structure.
\end{proof}

\begin{proposition}\label{main:prop2}
 The nilmanifold $N$ considered above is not the product of a compact 3-dimensional manifold and a circle.
\end{proposition}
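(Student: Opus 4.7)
My plan is to rule out $N\cong M\times S^1$ by a cup-product obstruction in $H^*(N;\bR)$, computed via Nomizu's theorem (\thmref{Nomizu}).

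First, I would analyze the algebra structure on cohomology in low degree by passing to the Chevalley-Eilenberg complex $(\wedge^*\fg^*,d)$ of the nilpotent Lie algebra with $dx_1=dx_2=0$, $dx_3=x_1\wedge x_2$ and $dx_4=x_1\wedge x_3$. The closed degree-one generators are $x_1$ and $x_2$, so by Nomizu $H^1(N;\bR)=\bR\langle[x_1],[x_2]\rangle$ and $b_1(N)=2$. The only potentially non-trivial product of degree-one classes is $[x_1]\cup[x_2]$, which is represented by $x_1\wedge x_2=dx_3$; hence this class vanishes in $H^2(N;\bR)$. Since $[x_i]\cup[x_i]=0$ for degree reasons, this forces the cup-product map $H^1(N;\bR)\otimes H^1(N;\bR)\to H^2(N;\bR)$ to be identically zero.

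Next I would argue by contradiction. Suppose $N\cong M\times S^1$ for some closed 3-manifold $M$. The K\"unneth formula gives $b_1(M)=b_1(N)-1=1$, so there is a non-zero class $\beta\in H^1(M;\bR)$. Let $\alpha\in H^1(S^1;\bR)$ be a generator and let $p_M,p_{S^1}$ denote the projections. Then $p_M^*\beta$ and $p_{S^1}^*\alpha$ lie in $H^1(N;\bR)$, and their cup product coincides with the external product $\beta\times\alpha\in H^2(N;\bR)$, which is non-zero by K\"unneth. This contradicts the vanishing established in the previous step, yielding the claim.

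The core of the argument is thus the identity $H^1(N;\bR)\cup H^1(N;\bR)=0$, obtained from the transparent relation $x_1\wedge x_2=dx_3$; this is much lighter than invoking, say, Thurston's geometrization to classify closed 3-manifolds with nilpotent fundamental group. I do not expect a serious obstacle: the computation of the Chevalley-Eilenberg differential is mechanical, and the K\"unneth half is standard. The one detail worth flagging is that $M$ need not be assumed orientable a priori, but since the argument uses only $\bR$-coefficient K\"unneth this causes no issue.
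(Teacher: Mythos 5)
Your proof is correct, but it follows a genuinely different route from the paper. The paper argues group-theoretically: it takes $\Lambda=\pi_1(M)\subset\Gamma$, which is nilpotent and torsion-free, realizes it via Mal'cev as the fundamental group of a compact nilmanifold $P$, uses asphericity to identify $H^*(M;\bZ)\cong H^*(\Lambda;\bZ)\cong H^*(P;\bZ)$, and then invokes the fact that a compact (here $3$-dimensional) nilmanifold has $b_1\geq 2$, contradicting the K\"unneth computation $b_1(M)=1$. You instead use a cup-product obstruction: Nomizu's quasi-isomorphism is a map of differential graded algebras, so it identifies the ring $H^*(N;\bR)$ with the Lie algebra cohomology ring, where the relation $x_1\wedge x_2=dx_3$ (together with $[x_i]\cup[x_i]=0$) kills all products of degree-one classes; on the other hand any product $M\times S^1$ with $b_1(M)=1$ has a non-trivial product $p_M^*\beta\cup p_{S^1}^*\alpha=\beta\times\alpha\neq 0$ by the K\"unneth isomorphism over $\bR$. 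Both arguments are sound; yours is lighter, avoiding Mal'cev's realization theorem, group cohomology, and the lower bound on $b_1$ of nilmanifolds, at the cost of relying on the multiplicative (not just additive) content of Nomizu's theorem, which is worth stating explicitly. The paper's argument has the advantage of not depending on the particular cup-product structure of this example and of localizing the obstruction entirely in $\pi_1$.
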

\begin{proof}
 By Nomizu theorem, we see that $b_1(N)=2$. Assume $N$ is a product $M\times S^1$, where $M$ is a compact 3-dimensional manifold. Since $N$ is a compact nilmanifold, the fundamental group
$\Gamma$ of $N$ is nilpotent and torsion-free. If $\Lambda=\pi_1(M)$, then $\Lambda\subset\Gamma$ is also nilpotent and torsion-free. In \cite{Ma}, Mal'cev showed that for such a group $\Lambda$ there
exists a real nilpotent Lie
group $\Lambda_\bR$ such that $P=\Lambda\backslash\Lambda_\bR$ is a nilmanifold. It is well known that a compact nilmanifold has first Betti number at least 2, hence $b_1(P)\geq 2$. Now $P$ is an
aspherical manifold; in this case, $H^*(P;\bZ)$ is isomorphic to the group cohomology $H^*(\Lambda;\bZ)$ of $\Lambda$ with coefficients in the trivial $\Lambda$-module $\bZ$ (see for instance
\cite[page 40]{Br}). Hence $b_1(\Lambda)\geq 2$. Now $M$ is also an aspherical space with fundamental group $\Lambda$, hence, by the same token, $b_1(M)\geq 2$. However, by the K\"unnet formula
applied to $N=M\times S^1$, we get $b_1(M)=1$. Alternatively, we could apply \cite[Theorem 3]{FHT} directly to our manifold $M$.
\end{proof}

\begin{proof}[Proof of Theorem \ref{main}]
Assume $N$ carries a lcK metric. Then $N$ must be in particular a complex manifold, and this contradicts Proposition \ref{main:prop1}. By Proposition \ref{main:prop2}, $N$ is not a product of a
compact 3-manifold and a circle.
\end{proof}

The lcs structure of the compact nilmanifold $(0,0,12,13)$, which was considered in the proof of Proposition \ref{main:prop1}, is of the first kind in the terminology of Vaisman \cite{Va1}. A lcs manifold of the first kind is locally the product of a contact manifold with the real line \cite{Va1} and the typical example of a compact lcs manifold of the first kind is the product of a compact contact manifold with the circle $S^1$. However, our compact example is not globally a product manifold. This is a good motivation in order to discuss the global structure of lcs manifolds of the first kind. In fact, this topic is the subject of a work in progress \cite{BaMa}.

An explicit realization of the compact nilmanifold $(0,0,12,13)$ is the following one. Let $G$ be the connected simply connected Lie group $G = \bR^4$ endowed with the multiplication
\[
(x,y,z,t) \cdot (x',y',z',t') = (x + x', y + y', z + z' + yx', t + t' + zx' + y \frac{(x')^2}{2}).
\]  
Then, $(\bR^4, \cdot)$ is a nilpotent Lie group and 
\[
\{x_1 = dx, \; x_2 = dy, \; x_3 = dz - y dx, \; x_4 = dt - zdx\}
\]
is a basis of left-invariant $1$-forms on $G$. Moreover, 
\[ 
\Gamma = 2\bZ \times \bZ \times \bZ \times \bZ
\]
is a co-compact discrete subgroup of $G$. 

 The proof of Theorem \ref{main} relies heavily on the classification of compact complex surfaces. It is therefore unlikely that our argument can be extended to the higher dimensional case.

It was conjectured in \cite[Conjecture 5.3]{BK} that any compact complex surface also admits a lcK structure (not necessarily compatible with the given complex structure). By what we argued so far, 
this conjecture is true in the context of 4-dimensional compact nilmanifolds. The torus $T^4$ admits a (globally) conformally K\"ahler structure and the Kodaira-Thurston carries a Vaisman structure
(see \cite{Sa}). The nilmanifold $N$ which we used in the proof of Theorem \ref{main} is not complex.

4-dimensional nilmanifolds display interesting behaviors with respect to the geometric structure we are dealing with:
 \begin{itemize}
  \item the torus $T^4$ is K\"ahler;
  \item the Kodaira-Thurston manifold is complex, symplectic, Vaisman but not K\"ahler;
  \item the nilmanifold $(0,0,12,13)$ is symplectic, locally conformal symplectic, but not complex (hence neither K\"ahler nor lcK).
 \end{itemize}
 
 \begin{remark}
 It would be interesting to find an example of a manifold which is genuinely lcs and complex but neither symplectic nor lcK. It is not hard to come up with such an example if one puts the more
restrictive condition of not being of Vaisman type.
\end{remark}

 It was conjectured in \cite{Ug} that if a $2n$-dimensional compact nilmanifold $N=\Gamma\backslash G$ carries a lcK metric, then $\fg=\fh_{2n-1}\oplus\bR$, where $\fh_{2n-1}$ is the
generalized Heisenberg algebra; this is the Lie algebra with basis $\{X_1,Y_1,\ldots,X_{n-1},Y_{n-1},Z\}$ and non-zero brackets $[X_i,Y_i]=-[Y_i,X_i]=-Z$ for
$i=1,\ldots,n-1$ (notice that $\fh_3$ is the Lie algebra of the Heisenberg group). This conjecture has been proved by Sawai in \cite{Sa}, assuming that the complex structure of the lcK metric is
left-invariant. A positive answer to this conjecture would produce immediately many examples of lcs non lcK manifolds. Vaisman structures on solvmanifolds have been considered in \cite{Ka}. More
generally, it was proven recently in \cite{GMO} that compact homogeneous lcK structures are Vaisman (see also \cite{ACHK} for a non-compact extension of this result).

\begin{remark}
 Since the Kodaira-Thurston manifold carries a Vaisman metric, it is clear that compact locally conformal K\"ahler manifolds need not be formal. Can one, in some sense to be made precise, ``bound''
the non-formality of a compact lcK manifold (for instance, in terms of non-zero Massey products)? The study of geometric formality in the context of Vaisman metrics was tackled in \cite{OP}.
\end{remark}

%
%
%

\end{document}